\documentclass{amsart}
\usepackage{amsmath}
\usepackage{amsthm}
\usepackage{amssymb}
\usepackage{amsaddr}
\usepackage{mathrsfs}
\usepackage{enumerate}
\usepackage{graphicx}
\usepackage{fullpage}

\newcommand{\sL}{\mathcal L}

\newcommand{\p}[1]{\left( {#1} \right)}

\newcommand{\sC}{\mathcal C}

\newtheorem{theorem}{Theorem}
\newtheorem{lemma}{Lemma}

\newtheorem{remark}{Remark}

\title[Bochner Integral Form of Operator-Valued Riccati Equations]{Well-Posedness of the Bochner Integral Form of Operator-Valued Riccati Equations}
\author{James Cheung}
\address{Millennium Space Systems, A Boeing Company. 2265 E. El Segundo Blvd, El Segundo, CA, 90245.}
\begin{document}

\maketitle

\begin{abstract}
    In this short paper, we prove that the Bochner integral form of the operator-valued Riccati equation has a unique solution if and only if its mild form has a unique solution. This implies that the mild and Bochner integral forms of this equation are equivalent. The result is obtained through an operator representation argument.
\end{abstract}

\section{Introduction}

Let $H$ be a separable Hilbert space equipped with the inner product $(\cdot, \cdot)_H$. We define $\sL(H)$ to be the space of bounded linear operators defined on $H$. We will then denote $A: \mathcal D(A) \rightarrow H$ as the generator of a $C_0$-semigroup $S(t) \in \sL(H)$ for all $t \in [0, \tau]$, where $\tau > 0$ and $\mathcal D(A)$ is the domain of $A$ defined densely in $H$. The solution space of interest in this work is $\sC([0,\tau], \sL(H))$, which defines the space of bounded operators that are norm-continuous with respect to $t \in [0,\tau]$.

In the differential form, the operator-valued Riccati equation is given by 
\begin{equation} \label{eqn: DARE}
\left\{
\begin{aligned}
    \frac{d}{dt} \Sigma(t) &= A\Sigma(t) + \Sigma(t)A^* + \Sigma(t) G \Sigma(t) - F \\
    \Sigma(0) &= \Sigma_0,
\end{aligned}
\right. 
\end{equation}
for all $t \in [0,\tau]$, where  $F, \Sigma_0 \in \sL(H)$ are self-adjoint operators, and $G$ is an unbounded self-adjoint operator whose domain is dense in $H$. The mild form of this equation is then given by
\begin{equation} \label{eqn: Mild Form}
    \Sigma(t)\phi = S(t) \Sigma_0 S^*(t)\phi + \int_0^t S(t-s)\p{F - \Sigma G \Sigma}S^*(t-s)\phi ds
\end{equation}
for all $\phi \in H$ and $t \in [0,\tau]$. From the results presented in \cite{bensoussan2007representation}, we know that it is generally known that \eqref{eqn: DARE} and \eqref{eqn: Mild Form} are equivalent, meaning that there exists a unique $\Sigma(\cdot) \in \sC([0,\tau], \sL(H))$ that satisfies both equations. In this paper, we demonstrate that $\Sigma(\cdot) \in \sC([0,\tau], \sL(H))$ satisfying \eqref{eqn: Mild Form} also satisfies
\begin{equation} \label{eqn: Bochner Form}
    \Sigma(t) = S(t) \Sigma_0 S^*(t) + \int_0^t S(t-s)\p{F - \Sigma G \Sigma}S^*(t-s) ds
\end{equation}
for all $t\in[0,\tau]$.

The well-posedness of the Bochner integral form of the operator-valued Riccati equation plays an important part in determining theoretical error bounds for approximations to this equation \cite{burns2022optimal, cheung2023approximation}. The previously known result presented in \cite{burns2015solutions} indicates that the Bochner integral form of the operator-valued Riccati equation is well-posed if the operators $F,G$ in \eqref{eqn: Bochner Form} are compact. This result was derived through an approximation argument. This work extends well-posedness to cases where $G$ is not necessarily bounded. We proceed to prove this result in the following section.

\section{Analysis}
In the analysis, we will use an operator representation argument to demonstrate that the mild form and the Bochner integral form of the operator-valued Riccati equation are equivalent. To this end, we will utilize the following corollary to the Riesz Representation Theorem found in \cite[Theorem A.63]{hall2013quantum}.

\begin{lemma} \label{lemma: quadratic form}
    If $q(\cdot): H \rightarrow \mathbb R$ is a bounded quadratic form on $H$, then there exists a unique self-adjoint operator $Q \in \sL(H)$ such that
    $$
        q(\phi) = \p{\phi, Q \phi}_H
    $$
    for all $\phi \in H$.
\end{lemma}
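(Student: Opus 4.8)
The plan is to reconstruct a bounded sesquilinear form from $q$, represent it through the Riesz theorem to obtain the operator $Q$, and then read off self-adjointness and uniqueness from the real-valuedness of $q$ together with the polarization identity. First I would recover the sesquilinear form $B(\cdot,\cdot): H \times H \rightarrow \mathbb{C}$ associated with $q$ via the polarization identity,
$$
B(\phi, \psi) = \frac{1}{4}\sum_{k=0}^{3} i^{k}\, q\p{\phi + i^{k}\psi},
$$
so that $B(\phi,\phi) = q(\phi)$ and $B$ is linear in one argument and conjugate-linear in the other. The boundedness hypothesis $\lvert q(\phi)\rvert \le M\norm{\phi}_H^2$ then transfers to $B$: expanding the four terms and invoking the parallelogram law yields an estimate of the form $\lvert B(\phi,\psi)\rvert \le C\norm{\phi}_H \norm{\psi}_H$.

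Next, for each fixed $\phi \in H$ the map $\psi \mapsto B(\phi, \psi)$ (taken in whichever slot is linear) is a bounded linear functional on $H$. By the Riesz Representation Theorem there is a unique vector, which I will name $Q\phi$, such that $B(\phi, \psi) = \p{Q\phi, \psi}_H$ for all $\psi \in H$; the sesquilinearity of $B$ forces the assignment $\phi \mapsto Q\phi$ to be linear, and the bound on $B$ gives $\norm{Q\phi}_H \le C\norm{\phi}_H$, so that $Q \in \sL(H)$. Evaluating at $\psi = \phi$ recovers $q(\phi) = \p{\phi, Q\phi}_H$ up to the chosen convention for $\p{\cdot,\cdot}_H$. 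Self-adjointness then comes for free from the fact that $q$ is real valued: since $q(\phi) = \overline{q(\phi)}$, the polarization identity shows $B(\phi,\psi) = \overline{B(\psi,\phi)}$, i.e.\ $B$ is Hermitian, and by conjugate symmetry of the inner product this is precisely the statement $\p{Q\phi,\psi}_H = \p{\phi, Q\psi}_H$, so $Q = Q^*$.

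Finally, uniqueness follows by one more application of polarization: if $\p{\phi, Q_1\phi}_H = \p{\phi, Q_2\phi}_H$ for every $\phi \in H$, then the polarized forms of $Q_1$ and $Q_2$ agree on all pairs $(\phi,\psi)$, whence $Q_1 = Q_2$. I expect the only genuine obstacle to be bookkeeping rather than conceptual: one must track carefully which argument of $B$ is linear versus conjugate-linear, so that $Q$ emerges as a genuinely linear (not conjugate-linear) operator sitting on the correct side of the inner product, and one must be consistent about the inner-product convention throughout. The boundedness estimate and the self-adjointness are then routine consequences of the polarization identity.
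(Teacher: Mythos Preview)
Your argument is correct and is precisely the standard proof of this fact: recover the sesquilinear form by polarization, bound it, apply Riesz to get a bounded linear $Q$, read off $Q=Q^*$ from the real-valuedness of $q$, and deduce uniqueness by polarizing again. The only caveat is the bookkeeping you already flagged (which slot of $B$ is linear, and the real-versus-complex polarization formula), but none of that affects the substance.

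As for comparison: the paper does not actually supply a proof of this lemma. It simply records the statement as a corollary of the Riesz Representation Theorem and cites \cite[Theorem~A.63]{hall2013quantum}. So there is no alternative argument to contrast with yours; what you have written is exactly the proof one would find behind that citation.
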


We now move to prove the main result of this work given in the following.

\begin{theorem}
Let $S(t) \in \sL(H)$ be a $C_0$-semigroup defined on $t \in [0,\tau]$. Now, suppose that there exists an unique time-dependent self-adjoint operator $\Sigma(\cdot) \in \sC([0, \tau], \sL(H))$ that satisfies the following mild form of the operator-valued Riccati equation
\begin{equation} \label{eqn: mild form}
    \Sigma(t)\phi = S(t) \Sigma_0 S^*(t)\phi + \int_0^t S(t-s)\p{F - \Sigma G \Sigma}S^*(t-s)\phi ds
\end{equation}
for all $\phi \in H$ and $t\in [0,\tau]$, where $F, \Sigma_0 \in \sL(H)$ are bounded self-adjoint operators and $G$ is a generally unbounded self-adjoint operator whose domains is dense in $H$. Then $\Sigma(\cdot) \in \sC([0,\tau], \sL(H))$ satisfies \eqref{eqn: mild form} if and only if it satisfies also the following Bochner integral form of the operator-valued Riccati equation
\begin{equation} \label{eqn: bochner form}
    \Sigma(t) = S(t) \Sigma_0 S^*(t) + \int_0^t S(t-s)\p{F - \Sigma G \Sigma}S^*(t-s) ds
\end{equation}
for all $t \in [0,\tau]$.
\end{theorem}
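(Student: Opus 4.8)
The plan is to observe that the mild form \eqref{eqn: mild form} and the Bochner integral form \eqref{eqn: bochner form} differ only in the sense in which their integrals are taken: the former is an $H$-valued Bochner integral evaluated against a fixed $\phi$, while the latter is an $\sL(H)$-valued Bochner integral. The implication \eqref{eqn: bochner form} $\Rightarrow$ \eqref{eqn: mild form} is the routine one: for each fixed $\phi \in H$ the point-evaluation $T \mapsto T\phi$ is a bounded linear map $\sL(H) \rightarrow H$, and bounded linear maps commute with the Bochner integral, so applying it to \eqref{eqn: bochner form} reproduces \eqref{eqn: mild form}. The substance of the theorem is therefore the converse, and I would devote the proof to showing that the operator-valued integral in \eqref{eqn: bochner form} exists in $\sL(H)$ and that the identity holds there.

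For the direction \eqref{eqn: mild form} $\Rightarrow$ \eqref{eqn: bochner form}, fix $t \in [0,\tau]$ and set $v(t) := \Sigma(t) - S(t)\Sigma_0 S^*(t)$, which lies in $\sL(H)$ and is self-adjoint because both $\Sigma(t)$ and $S(t)\Sigma_0 S^*(t)$ are. The first step is to promote the formal integrand $S(t-s)\p{F - \Sigma G \Sigma}S^*(t-s)$ to a genuine element $R(s) \in \sL(H)$ for almost every $s$. Here the unboundedness of $G$ is the essential difficulty, since $\Sigma(s) G \Sigma(s)$ need not be bounded. I would resolve this by the quadratic-form route: consider the real-valued map $\phi \mapsto \p{S^*(t-s)\phi, \p{F - \Sigma G \Sigma}S^*(t-s)\phi}_H$, and argue that because the mild form presupposes this expression is meaningful, $G\Sigma(s)S^*(t-s)$ is everywhere defined for almost every $s$; then, as $G$ is closed and $\Sigma(s)S^*(t-s)$ is bounded, the composition is closed and hence bounded by the closed graph theorem. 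This renders the map a bounded quadratic form, and Lemma~\ref{lemma: quadratic form} supplies a unique self-adjoint $R(s) \in \sL(H)$ representing it.

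The second step is to show that $s \mapsto R(s)$ is Bochner integrable on $[0,t]$ as an $\sL(H)$-valued function, i.e. that it is strongly measurable and that $\int_0^t \norm{R(s)}\, ds < \infty$; the norm-continuity of $\Sigma(\cdot)$ together with the strong continuity of $S(\cdot)$ would be used to control the norm and to handle measurability in the non-separable space $\sL(H)$. Writing $I := \int_0^t R(s)\, ds \in \sL(H)$, I would then exploit the weak property of the Bochner integral to obtain $\p{\phi, I\phi}_H = \int_0^t \p{\phi, R(s)\phi}_H\, ds$ for every $\phi \in H$, and by the mild form \eqref{eqn: mild form} the right-hand side equals $\p{\phi, v(t)\phi}_H$. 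Thus $I$ and $v(t)$ are self-adjoint operators in $\sL(H)$ inducing the same quadratic form, so the uniqueness clause of Lemma~\ref{lemma: quadratic form} forces $I = v(t)$, which is precisely \eqref{eqn: bochner form}.

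The main obstacle I expect is the first step: making $S(t-s)\p{F - \Sigma G \Sigma}S^*(t-s)$ into an honest bounded operator for almost every $s$ in spite of the unbounded $G$, and then verifying that the resulting family is measurable and norm-integrable. Once the integrand is legitimately an $\sL(H)$-valued, Bochner-integrable function, the identification with $v(t)$ is clean, since Lemma~\ref{lemma: quadratic form} lets me compare $I$ and $v(t)$ through their quadratic forms rather than manipulating the ill-behaved product $\Sigma G \Sigma$ directly.
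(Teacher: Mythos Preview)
Your plan is sound and its endgame coincides with the paper's: both arguments terminate by invoking Lemma~\ref{lemma: quadratic form} to identify two self-adjoint operators through their quadratic forms. The route you take to get there, however, is more laborious than the paper's and carries different content.

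The paper never attempts your first two steps. It does not argue that $S(t-s)\p{F-\Sigma G\Sigma}S^*(t-s)$ is, for each $s$, an element of $\sL(H)$, nor does it verify strong measurability or norm-integrability in $\sL(H)$. Instead it works entirely at the level of the three quadratic forms $q^1_t,q^2_t,q^3_t$ induced by the three terms of \eqref{eqn: mild form}, observes that each is bounded (simply because the mild form asserts that each term lies in $H$ when applied to $\phi$), produces the representing operators $Q^1_t,Q^2_t,Q^3_t$ via Lemma~\ref{lemma: quadratic form}, and then \emph{declares} $Q^3_t$ to be the operator-valued integral. The Remark following the proof makes this explicit: the Bochner integral in \eqref{eqn: bochner form} is taken to mean the unique self-adjoint operator representing $q^3_t$. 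With that reading, there is nothing left to check, and the proof is two lines.

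What your approach buys is an honest Bochner integral in the standard sense: a strongly measurable, norm-integrable $\sL(H)$-valued map whose integral exists by Bochner's theorem. Your closed-graph manoeuvre to force $G\Sigma(s)S^*(t-s)\in\sL(H)$ is the genuinely new ingredient, and it is not in the paper; note, though, that the step ``the mild form presupposes $G\Sigma(s)S^*(t-s)\phi$ is defined for all $\phi$'' hides the usual null-set-depending-on-$\phi$ issue, which you would have to dispatch (e.g.\ by separability of $H$ and a countable dense set). What the paper's approach buys is brevity and a sidestep of the non-separability of $\sL(H)$, at the cost of reinterpreting what ``Bochner integral'' means in \eqref{eqn: bochner form}. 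Both the reverse implication and the final quadratic-form matching are identical in the two arguments.
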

\begin{proof}
    Since $\Sigma(\cdot) \in \sC([0,\tau], \sL(H))$ satisfies \eqref{eqn: mild form}, then we must have that
    \begin{equation} \label{eqn: quadratic form}
        \p{\phi, \Sigma(t) \phi}_H = 
            \p{\phi, S(t)\Sigma_0 S^*(t)\phi}_H +
            \p{\phi, \int_0^t S(t-s)\p{F - \Sigma G \Sigma}(s) S^*(t-s)\phi ds}_H
    \end{equation}
    for all $\phi \in H$ and $t \in [0, \tau]$. 
    
    Defining 
    \begin{equation*}
    \begin{aligned}
        q^1_t(\phi) &:= \p{\phi, \Sigma(t)\phi}_H \\
        q^2_t(\phi) &:= \p{\phi, S(t)\Sigma_0 S^*(t) \phi}_H \\
        q^3_t(\phi) &:= \p{\phi, \int_0^t S(t-s)\p{F-\Sigma G \Sigma}(s)S^*(t-s)\phi ds}_H
    \end{aligned}
    \end{equation*}
    as quadratic forms defined for all $\phi \in H$ and $t\in [0,\tau]$. The boundedness of $q^1_t(\cdot)$ follows from the observation that $\Sigma(t)\phi \in H$ for all $\phi \in H$ and $t \in [0,\tau]$. Equation \eqref{eqn: mild form} then requires that $S(t) \Sigma_0 S^*(t)\phi \in H$ and that $\int_0^t S(t-s)\p{F-\Sigma G \Sigma}(s) S^*(t-s)\phi ds \in H$ for all $\phi \in H$ and $t \in [0,\tau]$, which implies the boundedness of $q^2_t(\cdot), q^3_t(\cdot)$. Applying Lemma \ref{lemma: quadratic form} then implies that there exists unique operators $Q^1_t, Q^2_t, Q^3_t \in \sL(H)$ so that
    \begin{equation*}
    \begin{aligned}
        q^1_t(\phi) = \p{\phi, Q^1_t \phi}_H \\
        q^2_t(\phi) = \p{\phi, Q^2_t \phi}_H \\
        q^3_t(\phi) = \p{\phi, Q^3_t \phi}_H
    \end{aligned}
    \end{equation*}
    for all $\phi \in H$ and $t\in [0,\tau]$.
    
    It then follows from \eqref{eqn: quadratic form} that 
    \begin{equation*}
        q^1_t(\phi) = q^2_t(\phi) + q^3_t(\phi)
    \end{equation*}
    for all $\phi \in H$ and $t \in [0,\tau]$. This can only be true if
    \begin{equation*}
        Q^1_t = Q^2_t + Q^3_t
    \end{equation*}
    for all $t \in [0,\tau]$. Then, by the definition of $q^1_t, q^2_t, q^3_t$ and the uniqueness of $Q^1_t, Q^2_t, Q^3_t$ (implied by Lemma \ref{lemma: quadratic form}) associated with their respective quadratic forms, we have necessarily that
    \begin{equation*}
    \begin{aligned}
        Q^1_t &= \Sigma(t) \\
        Q^2_t &= S(t)\Sigma_0 S^*(t) \\
        Q^3_t &= \int_0^t S(t-s)\p{F - \Sigma G \Sigma}(s) S^*(t-s)ds,
    \end{aligned}
    \end{equation*}
    for all $t \in [0,\tau]$. Hence, $\Sigma(\cdot) \in \sC\p{[0,\tau], \sL(H)}$ must also satisfy 
    \begin{equation*}
        \Sigma(t) = S(t) \Sigma_0 S^*(t) + \int_0^t S(t-s)\p{F - \Sigma G \Sigma}(s) S^*(t-s)ds
    \end{equation*}
    for all $t \in [0,\tau]$. Thus we have proven the ``if'' part of the theorem. The proof in the other direction follows by testing \eqref{eqn: bochner form} with any $\phi \in H$.
\end{proof}

\begin{remark}
We would like to point out that the analysis presented in the proof above implies that the operator-valued integral
\begin{equation*}
    \int_0^t S(t-s)\p{F - \Sigma G \Sigma}(s) S^*(t-s)ds 
\end{equation*}
is the unique representation of the bounded self-adjoint time-dependent linear operator $Q_t\in \sC([0,\tau],\sL(H))$ that satisfies 
\begin{equation*}
    \p{\phi, Q_t \phi}_H := \p{\phi, \int_0^t S(t-s)\p{F-\Sigma G \Sigma}(s)S^*(t-s)\phi ds}_H
\end{equation*}
for all $\phi \in H$ and $t \in [0,\tau]$. This indicates that the operator-valued integral used in the Bochner integral form of the operator-valued Riccati equation is well-defined. 
\end{remark}

\section{Discussion}
We have demonstrated above that the mild and Bochner integral forms of the operator-valued Riccati equation are equivalent. Instead of using an approximation argument, as done in \cite{burns2015solutions}, we have utilized an operator representation argument to achieve this result. This simpler proof then allows us to extend the known well-posedness results for the Bochner integral form to cases where the coefficient operator $G$ in the equation are unbounded. 

In following works, the author will utilize the result presented in this paper to determine error bounds for approximation methods to operator-valued Riccati equations for cases where the coefficient operators $G$ in the equation are defined by boundary and point control/observation  operators.

\bibliographystyle{plain}
\bibliography{bibliography}

\end{document}